\documentclass[reqno,12pt]{amsart} 
\usepackage{setspace,graphicx,epstopdf,amsmath,amsfonts,amsgen,
amstext,amsthm,amsbsy,amsopn,amssymb,bbm,tikz,parskip,verbatim,mathrsfs,enumerate,xcolor,comment}  
\usepackage[utf8]{inputenc}   
\usepackage[top=2.5cm, bottom=4cm, left=2.5cm, right=2.5cm]{geometry}

\usepackage[pdfborder={0 0 0}]{hyperref}

\thispagestyle{plain}

\newtheorem{theorem}{Theorem}

\newtheorem{lemma}[theorem]{Lemma}
\theoremstyle{definition}
\newtheorem{definition}[theorem]{Definition}

\begin{document}

\title{Shifted critical threshold in the loop ${ \boldsymbol{O(n)}}$ model  at arbitrary small $\boldsymbol{n}$}
\author{Lorenzo Taggi}
\maketitle

\begin{abstract}
In the loop $O(n)$ model a collection of mutually-disjoint self-avoiding loops is drawn at random on a finite domain of a lattice with probability proportional to
 $$\tiny{\lambda^{ \# \mbox{ edges} } n^{ \# \mbox{ loops} },}$$
where $\lambda, n \in [0, \infty)$.
Let $\mu$ be the connective constant of the lattice and, for any $n \in [0, \infty)$, let  $\lambda_c(n)$ be the largest value of $\lambda$ such that the loop length admits uniformly bounded exponential moments.
It is not difficult to prove that  $\lambda_c(n) =1/\mu$   when $n=0$ (in this case the model corresponds  to the self-avoiding  walk) and that for any $n \geq 0$, $\lambda_c(n) \geq 1/\mu$. 
In this note we prove that, 
\begin{align*}
\lambda_c(n) & > 1/\mu  \, \, \, \, \, \, \, \, \, \, \, \mbox{  whenever $n >0$}, \\
\lambda_c(n) & \geq 1/\mu  \,  + \,  c_0 \, n  \, + \,
O(n^2),
\end{align*}
 on $\mathbb{Z}^d$, with $d \geq 2$, and on the hexagonal lattice, where $c_0>0$.
This means that, when $n$ is positive (even arbitrarily small), as a consequence of the mutual repulsion between the loops, a phase transition 
can only occur at a strictly larger critical threshold than in the self-avoiding  walk. 
\end{abstract}

\section{Introduction}
The loop $O(n)$ model is defined as follows. Consider an infinite undirected graph $\mathcal{G} =(V,E)$ of bounded degree.
For any finite sub-graph $G =(V_G, E_G) \subset \mathcal{G}$, let $\Omega_G$ be the set of spanning sub-graphs of $G$ such that every vertex  has degree either zero or two. It follows from this definition that every connected component of the graph $\kappa \in \Omega_G$ is either an isolated vertex or a loop.
For any $\kappa$, let $o_G(\kappa)$ be the total number of edges of $\kappa$ and let 
$L_G(\kappa)$ be the total number of loops of $\kappa$.
Let $n , \lambda \in [0, \infty)$ be two parameters. 
The  measure of the loop $O(n)$ model is a probability measure on $\Omega_G$ which assigns weights,
\begin{equation}\label{eq:def measure}
\mathbb{P}_{G,  \lambda, n}(\kappa) :=  \frac{\lambda^{o_G(\kappa)} \, n^{L_G(\kappa)}}{Z_{\lambda, n}(G)}, \, \, \, \, \kappa \in \Omega_G,
\end{equation}
where $Z_{\lambda, n}(G)$ is a normalizing constant, to which we will refer as \textit{partition function} (we adopt the convention that $0^0 = 1$).

The loop $O(n)$ model was introduced on the hexagonal lattice as a graphical representation of the spin $O(n)$ model \cite{Domany1}.
The central question concerning this model is describing the structure and the size of the loops in the limit of large graphs.
This model presents a mathematically interesting and rich behaviour, which depends on the value of the  parameters
and on the structure of the underlying graph. It can be viewed as a model for random polymers interacting with a random environment through a `rigid' potential.
The study of random polymers in random environment is of great physical and mathematical interest  (see for example \cite{denHollander} for a review).
Another reason to consider this model is that it interpolates between several paradigmatic statistical mechanics models, to which it reduces for specific values of $n$, and, thus, allows  to compare them. More precisely, the model reduces to  
self-avoiding walk when $n=0$, the Ising model when $n=1$, critical percolation  when $n=\lambda=1$, the dimer model when $n=1$ and $\lambda=\infty$,
 proper 4-coloring  when $n=2$ and $\lambda=\infty$, 
 integer-valued $(n = 2)$ and tree-valued (integer $n \geq 3$) Lipschitz functions and the hard hexagon model $(n = \infty)$
on the hexagonal lattice. We refer to \cite{Peled} for an extensive discussion. Some of these relations  are also valid on $\mathbb{Z}^d$ for a variant of this model where the loops are allowed to overlap and the number of overlaps receives a  weight
which depends on $n$ \cite{Chayes}.  
Furthermore, when $n=2$, the loop $O(n)$ model is related to nearest-neighbour random lattice permutations \cite{Betz, Grosskinsky}, whose study stems from physics, where they are related to the theory of Bose-Einstein condensation \cite{Feynmann},
and when $n=2$ and $\lambda =\infty$, it is related to the double-dimer model (the only difference is that in random permutations  and in the double-dimer model also `loops' of length two are  allowed).

We now briefly review the rigorous results on the loop $O(n)$ model.
It was proved in \cite{Copin2} that,  when $\mathcal{G}$ is the hexagonal lattice,  $\mathbb{H}$, and $n$ is large enough, 
 the loops  are \textit{exponentially small} for any value of $\lambda \in (0, \infty)$ and that at least two distinct regimes 
exist:  a disordered phase in which each vertex is unlikely to be surrounded by any loops (when $n\lambda^6$ is small),  and an ordered phase which is a small perturbation of one of the three ground states (when $n \lambda^6$ is large).
It was proved in \cite{Copin3}  that, when $\mathcal{G} = \mathbb{H}$,  $n \in [1,2]$,  and $\lambda = 1/  \sqrt{2 + \sqrt{2 -n}}$ (the so called \textit{Nienhuis' critical point}), the loop $O(n)$ model exhibits macroscopic loops. 
When $n=0$, the loop $O(n)$ model corresponds to the single non-interacting random self-avoiding polygon (a self-avoiding walk which returns to the starting vertex).
To see this formally, one could slightly modify the definition (\ref{eq:def measure}) and let  $L_G(\kappa)$ be the number of loops in $\kappa \setminus \mathcal{P}_o(\kappa)$, with $\mathcal{P}_o(\kappa)$ being the connected component of $\kappa$ containing the origin, $o$.
This way, when $n=0$, only the loop containing the origin can be observed and it gets a   weight proportional to $\lambda^{| \mathcal{P}_o|}$. It is well known that 
in this case the length of $\mathcal{P}_o$ admits uniformly bounded exponential moments when  $\lambda \in (0, 1/ \mu)$, with $\mu = \mu(\mathcal{G})$ being the so-called \textit{connective constant} of $\mathcal{G}$
(see (\ref{eq:connective constant}) for a definition).
The exact value of this constant  is known on the hexagonal lattice \cite{Copin4}, $\mu(\mathbb{H}) =  1/  \sqrt{2 + \sqrt{2}}$.
Moreover, it was proved in \cite{Copin1} (in a slightly different setting) that 
$\mathcal{P}_0$ is \textit{weakly space-filling} when $\lambda \in ( 1 / \mu, \infty)$.
A variant of this model, (\ref{eq:def measure}), where the loops are allowed to intersect and the number of overlaps is weighted throught some vertex-factors  which depend on $n$ has been considered  in \cite{Chayes}.
There, it was proved that, on the torus of $\mathbb{Z}^d$,
for any $d \geq 2$, if $n$ is a large enough integer, the loops are exponentially small for any value of $\lambda \in (0, \infty)$, and that, when $d=2$, for any positive integer $n$ a break of translational symmetry occurs at a non-trivial value of $\lambda$.
However, such results do not apply to the model under consideration in this paper, since they require that  the vertex-factors are  bounded from below and from above by positive constants uniformly in $n$.

Thus, only part of  the conjectured phase diagram of  the loop $O(n)$ model has been rigorously proved.
This note proves a new fact concerning the phase diagram and the loop structure of the loop $O(n)$ model in $\mathbb{H}$ and in $\mathbb{Z}^d$, $d \geq 2$.
Let $\lambda_c(n)$ be the supremum among all values of $\lambda$ such that the loops are exponentially small (see (\ref{eq:lambda_0}) for a formal definition).
In this paper we prove that, whenever $n >0$, $\lambda_c(n) > \lambda_c(0) = 1/ \mu(\mathcal{G})$. This means that, as a result of the mutual repulsion between the loops,
which is present only when $n> 0$, it is more difficult for the loops to be long and, thus, the regime of macroscopic loops (if it exists) can only occur above
a critical  threshold which is strictly larger than in the case of no interaction. 
This is in accordance with the conjecture which was formulated by Nienhuis \cite{Nienhuis0, Nienhuis1, Nienhuis2},
namely that on the hexagonal lattice the critical threshold is strictly increasing with $n$ when $n$ is in $[0,2]$ and, more precisely,
it equals $ 1/  \sqrt{2 + \sqrt{2 -n}}$. 
A similar fact was proved in \cite{Betz}, where it was proved that the critical threshold of random lattice permutations is strictly larger than $1/\mu(\mathcal{G})$,
but the proof presented there is not valid for the model under consideration in this paper, since it essentially requires the existence of `loops' of length two.
Moreover, we provide a bound on the speed of convergence of $\lambda_c(n)$ to $1/\mu$ as $n$ goes to zero,  
$\lambda_c(n) \geq 1/\mu + c_0 n + O(n^2)$, where $c_0 >0$, corroborating another qualitative feature of the predicted phase diagram.

For any $\kappa \in \Omega_G$, and $x \in V_G$, let $\mathcal{P}_x(\kappa)$ be the subgraph of $\kappa$ 
corresponding to the connected component containing $x$.
 Let  $| \mathcal{P}_x(\kappa)| $ be the number of edges of $\mathcal{P}_x(\kappa)$. 
If no edge of $\kappa$ has $x$ as end-point, then the graph $\mathcal{P}_x(\kappa)$ contains only the vertex 
 $x$  and $| \mathcal{P}_x(\kappa)| = 0$. 
We will not deal with arbitrary graphs $G \subset \mathcal{G}$, but with \textit{domains}. A  graph  $G = (V_G, E_G) \subset \mathcal{G} = (V,E)$ is a said to be a {domain} if  its  edge set is $E_G = \{ \{x,y\} \in E \, : \, x,y \in V_G \}$.  
For any  $\delta >0$, $n \in (0, \infty)$ and $\lambda \in [0, \infty)$, define
$$
\mathcal{L}(\delta, \lambda, n) := \sup\limits_{ \substack{ G \subset \mathcal{G} :  \\ G  { \mbox{ \tiny  finite domain } } } }
\sup\limits_{ x \in V(G) } \, \, 
\mathbb{E}_{G, \lambda, n} \big ( e^{ \delta | \mathcal{P}_x|}  \big ),
$$
where $\mathbb{E}_{G, \lambda, n}$ denotes the expectation with respect to $\mathbb{P}_{G, \lambda, n}$.
If for some $\delta>0$ the previous quantity is finite,  the loop length admits uniformly bounded exponential moments.
For any $n \in (0, \infty)$, we define the critical threshold,
\begin{equation}\label{eq:lambda_0}
\lambda_c(n) : = \sup  \Big \{ \lambda \in [0, \infty) \,  : \,  \mathcal{L}(\delta, \lambda, n) < \infty \mbox{ for some positive $\delta$} \Big \}. 
\end{equation}
 \begin{theorem}\label{theo:1}
Let $\mathcal{G}$ be $\mathbb{Z}^d$, with $d \geq 2$, or the hexagonal lattice, $\mathbb{H}$, and let $\mu = \mu(\mathcal{G})$ be the connective constant.
We have that,
\begin{align}
\label{eq:strict monotonicity}
\lambda_c(n) & > 1/\mu, \, \, \, \,  \, \, \, \, \,
\, \, \, \,  \forall n \in (0, \infty),  \\
\label{eq:small n}
\lambda_c(n)  & \geq  1/\mu +  c_0 \,   n + O(n^2),
\end{align}
where $c_0 = c_0(\mathcal{G}) \in (0, \infty)$ is a constant which depends only on $\mathcal{G}$.
\end{theorem}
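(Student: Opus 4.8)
\emph{Proof proposal.} The plan is to bound $\mathbb E_{G,\lambda,n}\!\big(e^{\delta|\mathcal P_x|}\big)$ uniformly over finite domains $G$, for $\lambda$ in a neighbourhood of $1/\mu$ and some $\delta>0$. Decomposing a configuration according to the loop through $x$, and using that the loops disjoint from that loop form an independent copy of the model on the complementary domain, gives, for a self-avoiding polygon $\gamma\ni x$ with vertex set $\Gamma_\gamma$,
$$
\mathbb P_{G,\lambda,n}(\mathcal P_x=\gamma)=n\,\lambda^{|\gamma|}\,\frac{Z_{\lambda,n}(G\setminus\Gamma_\gamma)}{Z_{\lambda,n}(G)},
\qquad
\mathbb E_{G,\lambda,n}\!\big(e^{\delta|\mathcal P_x|}\big)=1+\sum_{\gamma\ni x}e^{\delta|\gamma|}\,\mathbb P_{G,\lambda,n}(\mathcal P_x=\gamma).
$$
The trivial estimate $Z_{\lambda,n}(G\setminus\Gamma_\gamma)\le Z_{\lambda,n}(G)$ gives $\mathbb P(\mathcal P_x=\gamma)\le n\lambda^{|\gamma|}$, which together with $\#\{\gamma\ni x:|\gamma|=k\}\le\mu^{k(1+o(1))}$ reproves $\lambda_c(n)\ge 1/\mu$. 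One wants to improve the partition-function ratio by a factor exponentially small in $|\gamma|$ at a rate of order $n$; since such an improvement is \emph{false polygon by polygon} (take $G=\gamma$, where this loop is the only one available), the polygons of a given length must be split into two families and treated separately.

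For \emph{spread-out} polygons — those for which, at a positive fraction of vertices $v$, the ball $B(v,r_0)$ (with $r_0$ a suitable fixed radius) leaves enough room off $\Gamma_\gamma$ for short loops through $v$ — I would insert such loops. Fix a $2r_0$-separated set $W$ of $\gtrsim|\gamma|$ such ``good'' vertices. Writing any configuration on $G$ as the union of (its loops disjoint from $\Gamma_\gamma$) and (its loops meeting $\Gamma_\gamma$) makes this decomposition canonical, so restricting the second part to families $\beta$ of pairwise-disjoint loops lying one per ball $B(v,r_0)$, $v\in W$, yields
$$
Z_{\lambda,n}(G)\ \ge\ \sum_{\kappa'\in\Omega_{G\setminus\Gamma_\gamma}}w(\kappa')\prod_{v\in W}\Big(1+n\!\!\sum_{\substack{\ell\ni v,\ \ell\subseteq B(v,r_0)\setminus\Gamma_\gamma\\ \ell\cap\kappa'=\emptyset}}\!\!\lambda^{|\ell|}\Big).
$$
On the event that $\kappa'$ does not intrude on $\bigcup_{v\in W}B(v,r_0)$ each factor is at least $1+c_1n$ with $c_1=c_1(\mathcal G,r_0)>0$; controlling the probability of that event through an a priori bound on $\mathbb P_{H,\lambda,n}(v\text{ covered})$ gives $Z_{\lambda,n}(G)\ge c\,(1+c_1n)^{c_2|\gamma|}Z_{\lambda,n}(G\setminus\Gamma_\gamma)$, hence $\mathbb P(\mathcal P_x=\gamma)\le C\,n\big(\lambda/(1+c_1n)^{c_2}\big)^{|\gamma|}$. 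Summed against the connective-constant bound, the spread-out polygons contribute a finite amount as soon as $\mu\,e^{\delta}\,\lambda\,(1+c_1n)^{-c_2}<1$.

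For \emph{crumpled} polygons — those that at a $(1-\varepsilon)$-fraction of their vertices fill up the surrounding ball — I would use only $\mathbb P(\mathcal P_x=\gamma)\le n\lambda^{|\gamma|}$ together with a purely combinatorial lattice estimate: the number of crumpled polygons of length $k$ through $x$ is at most $\tilde\mu^k$ with $\tilde\mu=\tilde\mu(\mathcal G,r_0,\varepsilon)<\mu$. This should follow from a pattern-theorem–type argument (a self-avoiding polygon that is densely packed at a positive density of vertices is, locally, almost forced, so such polygons are exponentially rarer than general ones, the extreme case being Hamiltonian cycles versus self-avoiding walks). Thus the crumpled polygons contribute a finite amount whenever $\tilde\mu\,e^{\delta}\,\lambda<1$. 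Combining the two families, $\mathbb E_{G,\lambda,n}(e^{\delta|\mathcal P_x|})$ is bounded uniformly in $G$ whenever $\lambda<\min\{(1+c_1n)^{c_2},\,\mu/\tilde\mu\}\,/(\mu e^{\delta})$; letting $\delta\downarrow0$ and using that $\mu/\tilde\mu>1$ is a fixed constant while $(1+c_1n)^{c_2}=1+c_1c_2n+O(n^2)$, this gives $\lambda_c(n)\ge\mu^{-1}+c_0n+O(n^2)$ with $c_0=c_1c_2/\mu>0$, and in particular $\lambda_c(n)>1/\mu$ for small $n>0$; arbitrary $n>0$ is then reached by running the same insertion scheme with an $n$-dependent (no longer linearised) gain, only strict positivity of the shift being needed.

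The hard part is the robustness of the loop-insertion step: the inserted loops must avoid $\gamma$, each other, \emph{and} the surrounding loops of $\kappa'$, and I expect the delicate point to be showing that $\kappa'$ typically leaves enough room near a positive fraction of the good vertices — precisely because one must work at $\lambda$ slightly above $1/\mu$, where the naive density bound $\mathbb P_{H,\lambda,n}(v\text{ covered})\le n\sum_k c_k\lambda^k$ is no longer uniform in the domain; handling this (e.g.\ by a bootstrap from $\lambda<1/\mu$, or by quantifying ``enough room'' purely through the geometry of $\gamma$) is the main obstacle. Secondary technical points are making the crumpled-polygon count $\tilde\mu<\mu$ quantitative and uniform, and choosing $r_0$ and $\varepsilon$ so that the two families are exhaustive while their conditions on $\lambda$ are simultaneously satisfiable in a genuine neighbourhood of $1/\mu$. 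The scheme should apply, up to the lattice constants, on $\mathbb Z^d$ ($d\ge2$) and on the hexagonal lattice alike.
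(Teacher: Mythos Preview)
Your overall architecture matches the paper's: write $\mathbb P(\mathcal P_x=\gamma)=n\lambda^{|\gamma|}Z(G\setminus\gamma)/Z(G)$, split the polygons of length $N$ into a ``rare'' family handled by a pattern theorem and a ``typical'' family on which the partition-function ratio is bounded by an exponential $(1+cn)^{-\alpha N}$, then combine. But your implementation of the insertion step is substantially harder than necessary, and the obstacle you identify as ``the main obstacle'' is real and unresolved in your proposal.

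The paper's key simplification is to insert the small loops \emph{on the vertices of $\gamma$ itself}, not in the complement. Concretely, take the pattern $P'=(o,\boldsymbol e_2,\boldsymbol e_1+\boldsymbol e_2,\boldsymbol e_1)$: whenever $P'$ occurs in $\gamma$, the four vertices involved span a $4$-cycle $Q_i$ lying entirely in the induced subgraph $\mathcal U(\gamma)$ on $V_\gamma$. One then uses the trivial factorisation
\[
Z_{\lambda,n}(G)\ \ge\ Z_{\lambda,n}(G\setminus\gamma)\cdot Z_{\lambda,n}(\mathcal U(\gamma))\ \ge\ Z_{\lambda,n}(G\setminus\gamma)\cdot Z_{\lambda,n}\!\Big(\bigcup_i Q_i\Big)\ =\ Z_{\lambda,n}(G\setminus\gamma)\,(1+\lambda^4 n)^{aN},
\]
where $aN$ is the number of occurrences of $P'$. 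Because the $Q_i$ use only vertices of $\gamma$, the configuration $\kappa'$ on $G\setminus\gamma$ \emph{cannot} interfere with them: there is no ``room'' issue, no need for a density bound at $\lambda>1/\mu$, no bootstrap. The two families are then simply $\{P'$ occurs $<a'N$ times$\}$ (handled directly by Kesten's pattern theorem, since $P'$ is proper internal) and its complement (handled by the display above). Your ``spread-out/crumpled'' dichotomy is the right shape, but by placing the inserted loops in $G\setminus\Gamma_\gamma$ you create the very interaction with $\kappa'$ that you then struggle to control; the paper sidesteps it entirely by noticing that the polygon already carries enough vertices to host the inserted loops.
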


Our proof is very simple and uses two ingredients. The first ingredient is the celebrated Kesten's pattern theorem, Theorem \ref{theo:Kesten} below,
which is used to prove that the ``typical'' loop presents  a huge number of many little `open loops' (a self-avoiding walk with one missing edge to make it a closed  loop). The second ingredient is a multi-valued map principle to show that it is expensive for the system not to close these many 'open loops'.
This leads to the  upper bound  $\mathbb{P}_{G, \lambda, n} ( \mathcal{P}_x = \tilde{\mathcal{P}})  \leq \lambda^{| \tilde {\mathcal{P}}|} c^{ |\tilde {\mathcal{P}}|}$
for some $c = (\lambda, n) \in (0, 1)$, which holds uniformly in $\tilde {\mathcal{P}}$, in $G$ and $x \in V_G$. The enhancement $\lambda_c(n)> 1/\mu$ follows from the fact that $c < 1$.

Our result leads to the following natural questions.
This paper proves that $\lambda_c(n) > \lambda_c(0)$ when $n>0$. Is $\lambda_c(n)$ a strictly increasing function of $n$? The  critical threshold of the loop $O(n)$ model on the hexagonal lattice has been conjectured to satisfy such a strict monotonicity property and it seems likely that the same is true also on $\mathbb{Z}^d$, $d \geq 2$.
Furthermore, can one prove that $\lambda_c(n) < \infty$ on $\mathbb{Z}^d$, $d \geq 3$, for some values of $n \in (0, \infty)$? This should be the case, at least for small values of $n$.

This note is organized as follows. In Section \ref{sect:Kesten} we present all the definitions and state Kesten's pattern theorem. In Section \ref{sect:proof}
we present the proof of Theorem \ref{theo:1}.

\section{Kesten's pattern Theorem}\label{sect:Kesten}
In this section  we introduce the  definitions which are necessary  to present the proof of Theorem \ref{theo:1} and we state  Kesten's pattern theorem.
All definitions and statements refer to $\mathbb{Z}^d$, with $d \geq 2$. Their generalization  to the hexagonal lattice, $\mathbb{H}$, is simple.

A \textit{self-avoiding walk} $\omega$ on $\mathbb{Z}^d$ beginning at the site $x \in \mathbb{Z}^d$ is defined as a sequence of sites  $(\omega(0), \omega(1), \ldots \omega(N))$ with $\omega(0) = x$, satisfying 
$| \omega(j + 1) - \omega(j) |_2 = 1$, where $| \, \cdot \, |_2$ denotes the $L_2$ norm,  and $\omega(i) \neq \omega(j)$ for all $i \neq j$.
We write $| \omega | = N$ to denote the \textit{length} of $\omega$. 
We let $SAW_x(N)$ be the total number of self-avoiding walks of length $N$ beginning at the site $x \in \mathbb{Z}^d$.
The limit 
\begin{equation}\label{eq:connective constant}
\mu := \lim\limits_{N \rightarrow \infty}   \big (  \,  |SAW_x(N)| \,  \big )^{\frac{1}{N}},
\end{equation}
exists \cite{Hammersley}, it  is known as \textit{connective constant}, and it satisfies $\mu=\mu(\mathbb{Z}^d) \in [d, 2d-1]$.

A pattern is a short self-avoiding walk occurring in a longer self-avoiding walk.
\begin{definition}
\textit{A pattern $P = (p(0), \ldots, p(n))$ is said to occur at the $j$-th step of the self-avoiding walk $\omega = (\omega(0), \ldots, \omega(N))$ if there exists a vector
 $v \in \mathbb{Z}^d$ such that $\omega(j+k) = p(k) + v$ for every $k = 0, \ldots, n.$}
\end{definition}
Kesten's pattern theorem does not apply to general patterns, but to \textit{proper internal patterns}.
\begin{definition}
\textit{A pattern $P$ is a proper internal pattern if for every $k \in \mathbb{N}$ there exists a self-avoiding walk on which $P$ occurs at $k$ or more different steps.}
\end{definition}
We are ready to state  Kesten's pattern theorem, which was proved in \cite{Kesten} (see also \cite{Madras}[Chapter 7]).
For a pattern $P$, an integer $ N$, a vertex $x \in \mathbb{Z}^d$, and  a real number $w$,  let  $SAW_x[N, w, P] \subset SAW_x(N)$ be the set of $N$-steps self-avoiding walks   presenting the pattern $P$ at less than $w$ steps.

\begin{theorem}[Kesten, 1963]\label{theo:Kesten}
Recall that $\mu = \mu(\mathbb{Z}^d)$ is the connective constant. For any proper internal pattern $P$, there exists an $a >0$ small enough such that 
\begin{equation}
\limsup\limits_{N \rightarrow \infty} \big (   |SAW_x[N, aN, P]|     \big )^{\frac{1}{N}} < \mu.
\end{equation}
\end{theorem}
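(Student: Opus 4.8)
I would prove this by an entropy (counting) estimate, showing that self-avoiding walks with anomalously few occurrences of $P$ can be ``inflated'' into so many distinct walks that their number must be exponentially smaller than $\mu^N$. Since $|SAW_x(N)|^{1/N}\to\mu$, it suffices to find, for every small enough $a>0$, constants $c>0$ and $N_0$ with $|SAW_x[N,aN,P]|\le\mu^N e^{-cN}$ for all $N\ge N_0$, which is what I would aim at.

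\emph{Step 1: a composable pattern block.} From the hypothesis, for each $k$ there is a self-avoiding walk $\sigma_k$ containing at least $k$ occurrences of $P$. Taking $k$ large and applying the pigeonhole principle to the finitely many possible ``local pictures'' formed by two consecutive non-overlapping occurrences of $P$ in $\sigma_k$ together with the sub-walk of $\sigma_k$ joining them, I would extract a self-avoiding walk $W$ of some fixed length $L$ and displacement $v = W(L)-W(0)$ such that: $P$ occurs at an interior step of $W$; and every concatenation $W^{(m)}$ of translated copies of $W$ (successive copies shifted by multiples of $v$) is self-avoiding. In particular $W$ sits in a box of bounded diameter and can be spliced into a longer walk in place of a matching segment without the inserted material intersecting itself.

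\emph{Step 2: surgery and counting.} Given $\omega\in SAW_x[N,aN,P]$, I would cut it into $\lfloor N/L\rfloor$ consecutive $L$-step sub-walks. Each occurrence of $P$ meets at most two of them, so for $a\le 1/(8L)$ at least a quarter of the sub-walks contain no occurrence of $P$. Call such a $P$-free sub-walk a \emph{site} if the enlargement of its bounding box by the diameter of $W$ is disjoint from the rest of $\omega$ and from the endpoints of $\omega$; at a site one may splice a copy of $W$ into a fixed slot inside the enlarged box, obtaining a longer self-avoiding walk with strictly more occurrences of $P$. If $\omega$ has at least $cN$ sites, then inserting $W$ at a prescribed (fixed, $\omega$-independent) number $\approx cN/2$ of them produces at least $e^{cN/3}$ distinct self-avoiding walks, all of one fixed length $N' = N(1+O(c))$; conversely, the inserted copies of $W$ can be recognised and excised, so each resulting walk arises from at most $e^{o(N)}$ choices of $\omega$. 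The entropy inequality $e^{cN/3}\,|SAW_x[N,aN,P]|\le e^{o(N)}\,|SAW_x(N')|$, combined with $|SAW_x(N')|\le\mu^{N'}e^{o(N')}$ and $N' = N(1+O(c))$, then forces $|SAW_x[N,aN,P]|\le\mu^N e^{-c'N}$ once $c$ is chosen small enough.

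The hard part will be the hypothesis used in Step 2, namely that a walk with fewer than $aN$ occurrences of $P$ always has at least $cN$ sites — equivalently, controlling self-avoidance of the surgery even for geometrically complicated (nearly space-filling) walks, where the $P$-free sub-walks could all be crowded by other parts of $\omega$. The remedy I would use is to make the local modification small enough (bounded region, boundedly many changed edges) that a walk with fewer than $cN$ ``room-to-modify'' sub-walks is, by a separate volume-counting estimate, already exponentially rare, and then to run the argument on the complementary family; making this dichotomy precise, and choosing a small repertoire of alternative insertions so that the reverse (excision) map stays almost injective, is where the genuine work lies. This is exactly what is carried out in Kesten's original argument \cite{Kesten} and in the exposition of \cite{Madras}[Chapter 7].
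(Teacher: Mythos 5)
The paper does not prove this statement: it is Kesten's pattern theorem, quoted as a black box from \cite{Kesten} (see also \cite{Madras}[Chapter 7]), so there is no internal proof to compare yours against. Your outline is a faithful sketch of exactly that classical insertion--surgery argument, and you correctly locate the crux (producing linearly many admissible surgery sites on an \emph{arbitrary} walk, including densely packed ones); but as written it is not a proof, and it contains one quantitative claim that is actually false as stated, not merely unproved. Namely, the excision map does not in general have multiplicity $e^{o(N)}$: to invert the insertion one must identify which $j$ of the occurrences of $P$ in the image walk were inserted, and since the image can have up to $aN+Cj$ occurrences this multiplicity is of order $\binom{aN+Cj}{j}$, which is exponential in $N$ when $j\asymp N$. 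The classical proof does not avoid this; it wins by comparing the two exponentials $\binom{bN}{j}$ (choices of insertion sites, $b$ the site density) and $\binom{aN+Cj}{j}$ (multiplicity), choosing $j=\epsilon N$ with $\epsilon$ small relative to $b/C$ and then $a$ small relative to $\epsilon$ --- this parameter ordering is precisely the source of the ``there exists $a>0$ small enough'' in the statement, and your sketch loses that logical structure by discarding the multiplicity term. Together with the unresolved site-density lemma (which in \cite{Madras} is handled by a separate geometric argument about first entrances into lattice cubes, not by the volume-counting dichotomy you gesture at), the proposal should be regarded as a correct roadmap to the literature rather than a proof; since the paper itself only cites the result, simply invoking \cite{Kesten} or \cite{Madras}[Theorem 7.2.3] is the appropriate resolution here.
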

Before presenting the proof of the main theorem, we will provide a rigorous definition of self-avoiding polygon and state one important property.
For $N \geq 4$, an $N$-step \textit{self-avoiding polygon} $\mathcal{P}$ is an undirected graph $\mathcal{P} \subset G$ consisting of $N$ nearest-neighbour sites 
and edges connecting them
with the following property: there exists a corresponding $(N-1)$-step self-avoiding walk $\omega$ having $|\omega(N-1) - \omega(0)|_2=1$ such that 
the vertex set of $\mathcal{P}$ contains all the elements of $\omega$ and the edge set of $\mathcal{P}$ contains the edge joining $\omega(N-1)$ to $\omega(0)$ and the $N-1$ edges joining $\omega(i-1)$ to $\omega(i)$ ($i=1, \ldots, N-1$).
Let $SAP_x(N)$ be the set of $N$-step self-avoiding polygons $\mathcal{P}$ such that one vertex of $\mathcal{P}$ is $x$.
We also define the set $SAP_x(1)$, which includes only one graph, the (degenerate) 1-step self-avoiding polygon $\mathcal{P}$, which contains only the vertex $x$ and no edges, and $SAP_x(N)$ is empty for $N=2$ or $N$ odd.


Hammersley proved in \cite{Hammersley2} the remarkable fact that the connective constant of the self-avoiding polygons exists and is the same as the connective constant of self-avoiding walks,
\begin{equation}\label{eq:connective constant polygons}
\mu(\mathbb{Z}^d) = \lim\limits_{N \rightarrow \infty}   \big (  \,  |SAP_x(N)| \,  \big )^{\frac{1}{N}}.
\end{equation}
From the super-multiplicativity property of self-avoiding polygons it also follows that 
\begin{equation}\label{eq:number SAP}
|SAP_x(N) | \leq \frac{(d-1)}{d} \, N \mu^N.
\end{equation}
(see for example \cite{Madras}[Equations (3.2.1) and (3.2.5)]).

\section{Proof of Theorem \ref{theo:1}}\label{sect:proof}
Fix a dimension $d \geq 2$.
We want to assign an orientation to self-avoiding polygons
in order define pattern occurrences.
For any vertex $x \in \mathbb{Z}^d$, any integer $N > 1$,
and any self-avoiding polygon $\mathcal{P} \in SAP_x(N)$, one can identify precisely two $N-1$ steps self-avoiding walks,
$\omega^1 = (\omega^1(0), \ldots, \omega^1(N-1))$,  and
$\omega^2 = (\omega^2(0), \ldots, \omega^2(N-1)) \in SAW_x(N-1)$,
such that, for any $k \in \{1,2\}$ and $i \in [0, N-2]$, $\{ \omega^k(i), \omega^k(i+1) \}$ is an edge of $\mathcal{P}$
and $\{ \omega(N-1), \omega(0)\}$ is an edge of $\mathcal{P}$.
Since the map which assigns to any self-avoiding polygon 
$\mathcal{P} \in SAP_x(N)$ the corresponding pair 
of self-avoiding walks $\{\omega^1, \omega^2\}$ is a bijection, we can define a new bijection
$f : SAP_x(N) \mapsto SAW_x(N-1)$
which assigns to any self-avoiding polygon 
$\mathcal{P} \in SAP_x(N)$ a unique self-avoiding walk
$f(\mathcal{P}) \in \{\omega^1, \omega^2\}$ 
in some arbitrary manner (for example, $f$ might depend on some features $\mathcal{P})$.
The function $f$ is fixed in the whole proof and its definition will never be  made explicit.
We say that a  pattern $P$ occurs at the step $j \in [0, N-1]$ of a self-avoiding polygon $\mathcal{P} \in SAP_x(N)$ if it occurs at the step $j \in [0, N-1]$ of the self-avoiding walk $f(\mathcal{P}) \in SAW_x(N)$.
We let $SAP_x(N, w, P) \subset SAP_x(N)$ be the set of self-avoiding polygons of length $N$  such 
that  the pattern $P$ is present at less than $w$ steps.

Consider a finite sub-graph $G =(V_G,E_G) \subset \mathbb{Z}^d$.
Let also
\begin{equation}\label{eq:definition partition function}
Z_{\lambda, n}(G) = \sum\limits_{\kappa \in \Omega_G} \lambda^{ o_G(\kappa)} n^{L_G(\kappa)},
\end{equation}
be the partition function, which depends on the graph $G$.

We now define one specific pattern.
Let $P^{\prime}$ be the pattern corresponding to the sequence of vertices $(o, \boldsymbol{e}_2, \boldsymbol{e}_1+ \boldsymbol{e}_2, \boldsymbol{e}_1)$,
with $o \in \mathbb{Z}^d$ being the origin
and $\boldsymbol{e_i}$ the Cartesian unit vectors (see Figure \ref{Figure}). It is not difficult to see that such a  pattern is proper internal.
We start with an auxiliary lemma, which involves the self-avoiding polygons presenting such a pattern at many steps.
Given two graphs $G_1 = (V_{G_1}, E_{G_1})  \subset G_2 = ( V_{G_2}, E_{G_2})$, we let $G_2 \setminus G_1$ be the graph whose vertex set is
$V_{G_2} \setminus V_{G_1}$ and whose edge set is $\{  \{x,y\} \in E_{G_2} \, \, : \, \,   x,y \in V_{G_2} \setminus V_{G_1} \}$.
\begin{lemma}\label{lemma:multivalued}
For any $a \in (0,1)$ and $N \in \mathbb{N}$, let $G=(V_G,E_G) \subset \mathbb{Z}^d$ be an arbitrary finite domain, let $x \in V_G$ be an arbitrary vertex, let $\mathcal{P} \in SAP_x(N)$ be such that
$\mathcal{P} \subset G$ and  such that $\mathcal{P} \notin  SAP_x(N, aN, P^{\prime})$.
Then,
$$\frac{Z_{\lambda,n}( G \setminus \mathcal{P})}{Z_{\lambda,n}(G)} \leq \frac{1}{\big  (    1 + \lambda^4 \, n    \big )^{aN}}.$$
\end{lemma}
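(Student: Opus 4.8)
The plan is to prove the estimate by a multi-valued map argument: from each configuration $\kappa'$ of $G\setminus\mathcal{P}$ I will manufacture exponentially many configurations of $G$ by ``closing'' the numerous open square-loops sitting inside $\mathcal{P}$, and then compare the two partition functions term by term. Write $f(\mathcal{P})=(\omega(0),\dots,\omega(N-1))$ and let $j_1<\dots<j_M$ be the steps at which $P^{\prime}$ occurs along $f(\mathcal{P})$; the hypothesis $\mathcal{P}\notin SAP_x(N,aN,P^{\prime})$ means precisely $M\geq aN$. At the $i$-th occurrence the walk traverses the three consecutive edges $\{\omega(j_i),\omega(j_i+1)\}$, $\{\omega(j_i+1),\omega(j_i+2)\}$, $\{\omega(j_i+2),\omega(j_i+3)\}$ of a unit square, and adjoining the missing fourth edge $g_i:=\{\omega(j_i),\omega(j_i+3)\}$ closes this square into a $4$-loop $\square_i$. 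The construction is legitimate because $g_i\in E_G$: the endpoints $\omega(j_i)$ and $\omega(j_i+3)$ are translates of $o$ and $\boldsymbol{e}_1$, hence nearest neighbours in $\mathbb{Z}^d$, and they belong to the vertex set of $\mathcal{P}$, which is contained in $V_G$; since $G$ is a domain, $g_i$ is therefore an edge of $G$, and $\square_i$ is a genuine $4$-loop of $G$.

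The one point that really needs an argument — and which I expect to be the main obstacle, mild as it is — is that the loops $\square_1,\dots,\square_M$ are pairwise vertex-disjoint. This is a consequence of the rigid geometry of $P^{\prime}=(o,\boldsymbol{e}_2,\boldsymbol{e}_1+\boldsymbol{e}_2,\boldsymbol{e}_1)$. Two occurrences cannot be at steps differing by $1$ or by $2$: matching the two translated copies of $P^{\prime}$ along their overlap forces, respectively, $\boldsymbol{e}_1=\boldsymbol{e}_2$ and $2\boldsymbol{e}_2=0$, both impossible. They cannot differ by $3$ either: the last step of the earlier occurrence points along $-\boldsymbol{e}_2$ and the first step of the later one along $+\boldsymbol{e}_2$, so $f(\mathcal{P})$ would immediately revisit the vertex it just left, contradicting self-avoidance. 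Hence $j_{i+1}-j_i\geq 4$ for all $i$, the index blocks $\{j_i,\dots,j_i+3\}$ are pairwise disjoint, and self-avoidance of $f(\mathcal{P})$ turns this into vertex-disjointness of the $\square_i$. The $\square_i$ are automatically disjoint from $\kappa'\in\Omega_{G\setminus\mathcal{P}}$, which is supported on $V_G$ minus the vertices of $\mathcal{P}$; and the degenerate case $N=4$, where the single square coincides with $\mathcal{P}$, requires no special treatment because $\mathcal{P}$ itself is never adjoined.

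What remains is bookkeeping. For each $S\subseteq\{1,\dots,M\}$ put $\kappa'_S:=\kappa'\cup\bigcup_{i\in S}\square_i$; by the disjointness just established $\kappa'_S\in\Omega_G$, with $o_G(\kappa'_S)=o_G(\kappa')+4|S|$ and $L_G(\kappa'_S)=L_G(\kappa')+|S|$, so its weight equals $\lambda^{o_G(\kappa')}n^{L_G(\kappa')}(\lambda^4 n)^{|S|}$. The map $(\kappa',S)\mapsto\kappa'_S$ is injective: restricting $\kappa'_S$ to $G\setminus\mathcal{P}$ recovers $\kappa'$, and the set of vertices of $\mathcal{P}$ occupied by $\kappa'_S$ then recovers $S$. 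Summing over all $\kappa'\in\Omega_{G\setminus\mathcal{P}}$ and all $S$, and discarding the remaining non-negative terms of $Z_{\lambda,n}(G)$,
\begin{equation*}
Z_{\lambda,n}(G)\ \geq\ \sum_{\kappa'\in\Omega_{G\setminus\mathcal{P}}}\lambda^{o_G(\kappa')}n^{L_G(\kappa')}\sum_{S\subseteq\{1,\dots,M\}}(\lambda^4 n)^{|S|}\ =\ (1+\lambda^4 n)^{M}\,Z_{\lambda,n}(G\setminus\mathcal{P}).
\end{equation*}
Since $M\geq aN$ and $1+\lambda^4 n\geq 1$, this gives $Z_{\lambda,n}(G\setminus\mathcal{P})/Z_{\lambda,n}(G)\leq(1+\lambda^4 n)^{-aN}$, which is the assertion. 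In short, the only non-mechanical ingredient is the vertex-disjointness of the occurrences of $P^{\prime}$; once that and the domain property (needed for $g_i\in E_G$) are granted, the weighted count closes the proof.
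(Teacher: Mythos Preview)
Your proof is correct and follows essentially the same idea as the paper: place vertex-disjoint unit squares at the occurrences of $P'$ inside $\mathcal{P}$ and use them to manufacture, for each configuration of $G\setminus\mathcal{P}$, $(1+\lambda^4 n)^{aN}$ worth of configurations of $G$. The paper packages this as the two-step partition-function factorization $Z_{\lambda,n}(G)\geq Z_{\lambda,n}(G\setminus\mathcal{P})\,Z_{\lambda,n}(\mathcal{U}(\mathcal{P}))\geq Z_{\lambda,n}(G\setminus\mathcal{P})\,Z_{\lambda,n}(\cup_i Q_i)$ rather than your explicit injection $(\kappa',S)\mapsto\kappa'_S$, and it takes the vertex-disjointness of the $Q_i$ for granted, whereas you supply the argument that consecutive occurrences of $P'$ are at least four steps apart.
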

\begin{proof}
Given a self-avoiding polygon $\mathcal{P} \subset G$ (which was defined as a graph), we let $\mathcal{U}(\mathcal{P})$ be the graph whose vertex set is 
$V_{\mathcal{P}}$ and whose edge set is $\{  \{x,y\} \in E_G \, : \, x,y \in V_{\mathcal{P}} \}$. 
Note that $\mathcal{P}$ does not necessarily equal $\mathcal{U}(\mathcal{P})$, but it is always contained in $\mathcal{U}(\mathcal{P})$.
The following relation holds,
\begin{equation}\label{eq:relation partitions1}
Z_{\lambda, n}( G) \geq Z_{\lambda, n}( G \setminus \mathcal{P} ) Z_{\lambda, n}( \mathcal{U} (\mathcal{P} ) ) .
\end{equation}
Indeed, in the right-hand side we have the weight of configurations $\kappa \in \Omega_G$ such that no loop contains one vertex in $V_{\mathcal{P}}$ and one vertex
in $V_G \setminus V_{\mathcal{P}}$ at the same time, while in the left-hand side we have the weight of \textit{all} configurations  $\kappa \in \Omega_G$.

For a self-avoiding polygon $\mathcal{P} \in  SAP_x(N)$ satisfying the assumptions of the lemma, let
 $x_1$, $x_2$, \ldots $x_{aN}$ be the sequence of 
the first $aN$ sites of $f(\mathcal{P})$ where the pattern $P^{\prime}$ occurs, ordered in order of appearance  along $f(\mathcal{P})$, writing $aN$ in place of $\lceil a N \rceil$.
For any $i \in [1, aN]$, let now $Q_i$ be the (unique)  self-avoiding polygon of length four containing the vertices $\{x_i, x_i + \boldsymbol{e_2},  x_i + \boldsymbol{e_1} +  \boldsymbol{e_2},  x_i + \boldsymbol{e_1}\}$ and the edges connecting them (see Figure \ref{Figure}).
Let $\cup_{i=1}^{aN} Q_i$ be the graph corresponding to the union of the vertex sets and of the edge sets of the self-avoiding polygons $Q_i$, $i \in [1,aN]$.
Since $\cup_{i=1}^{aN} Q_i \subset \mathcal{U} (\mathcal{P} )$ (here we use the fact that $G$ is a domain), we deduce that,
\begin{equation}\label{eq:relation partitions2}
 Z_{\lambda, n}( \mathcal{U} (\mathcal{P} ) ) \geq  Z_{\lambda, n}( \cup_{i=1}^{aN} Q_i ). 
\end{equation}
\begin{figure}
    \centering
\includegraphics[scale=0.40]{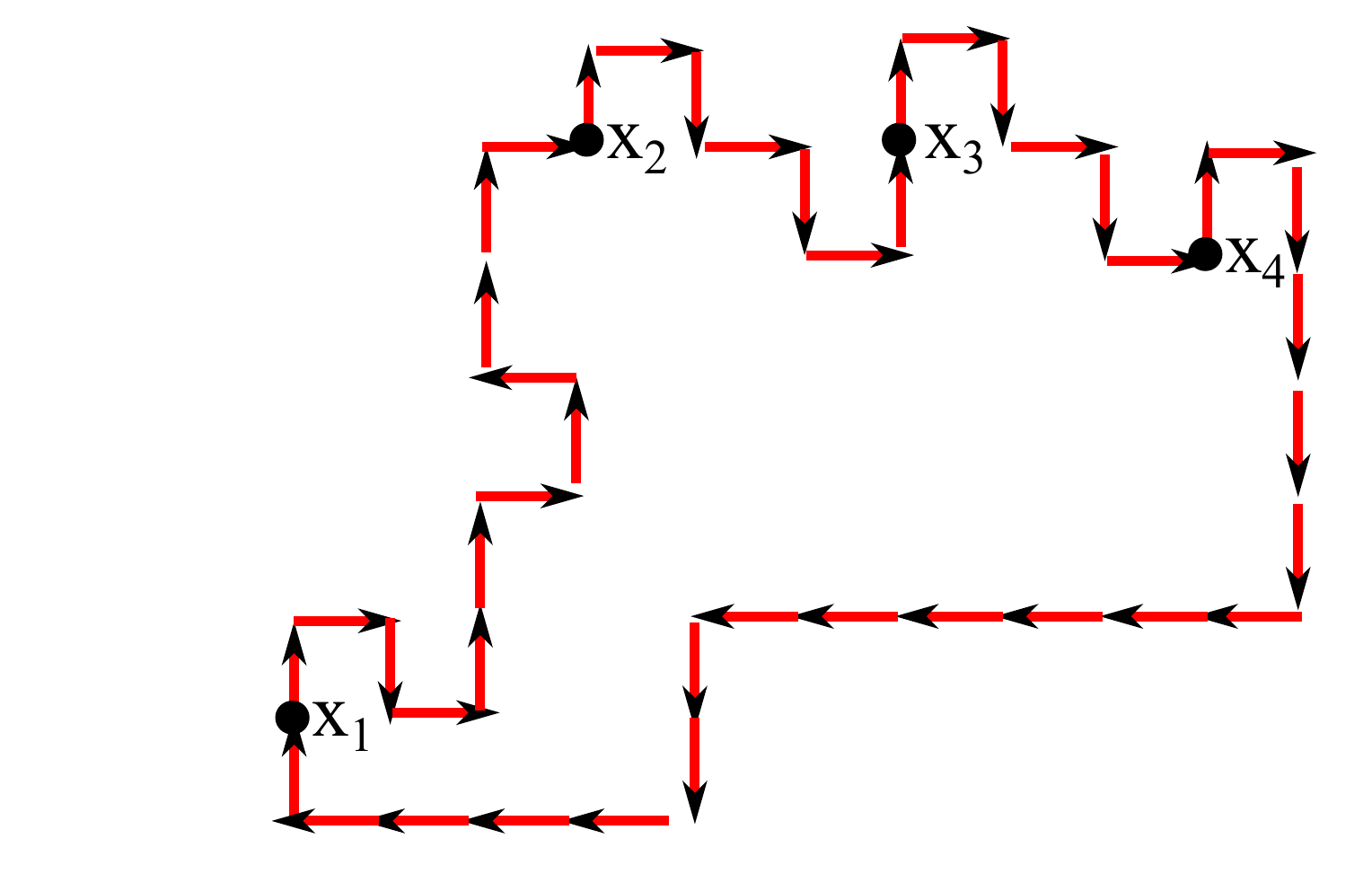}
\includegraphics[scale=0.40]{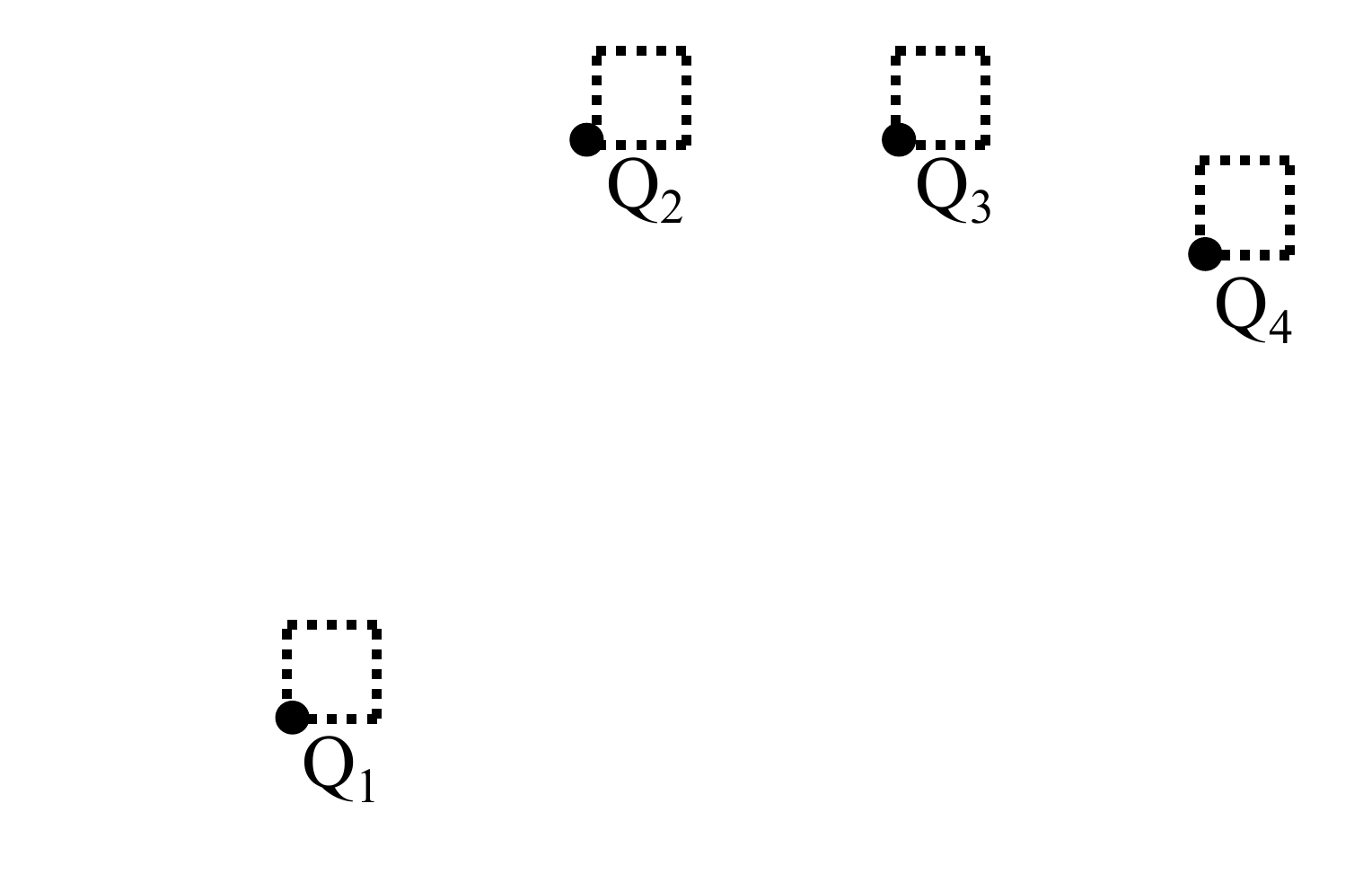}
\caption{\textit{{Left:}} A self-avoiding polygon ${\mathcal{P}}$ presenting the pattern $P^{\prime}$ at the vertices $x_i$. \textit{{Right:}} Self-avoiding polygons of length four, $Q_i$, for the self-avoiding polygon ${\mathcal{P}}$ represented on the left.  }\label{Figure}
\end{figure}
We deduce from (\ref{eq:relation partitions1}) and (\ref{eq:relation partitions2}) that,
\begin{equation}
\frac{Z_{\lambda,n}( G \setminus \mathcal{P})}{Z_{\lambda,n}(G)}
\leq  \frac{Z_{\lambda,n}( G \setminus \mathcal{P})}{Z_{\lambda, n}( G \setminus \mathcal{P} ) Z_{\lambda, n}( \mathcal{U} (\mathcal{P} ) )}  
\leq 
\frac{1}{  Z_{\lambda,n} \big ( \cup_{i=1}^{aN} Q_i \big ) }.
\end{equation}
We now claim that 
\begin{equation}\label{eq:bound partition}
Z_{\lambda,n} \big ( \cup_{i=1}^{aN} Q_i \big ) = (1 + \lambda^4 \, n)^{aN},
\end{equation}
which concludes the proof of the lemma when replaced in the previous expression.

Thus, for a subset $B \subset \{1, 2, \ldots aN\}$ (which might be $B = \emptyset$), let $\kappa_B \in \Omega_{ \cup_{i=1}^{aN} Q_i }$
be the configuration such that, for all $i \in B$,  ${\mathcal{P}}_{x_i} = Q_i$,
and for all $i \in  \{1, 2, \ldots aN\} \setminus B$, ${\mathcal{P}}_{x_i}$ is a degenerate self-avoiding polygon containing only the vertex $x_i$.
We have that, $L_G(\kappa_B)= |B|$ and that $o(\kappa_B) = \lambda^{4 |B|}.$
Thus,
$$
Z_{\lambda,n}( \cup_{i=1}^{aN} Q_i   ) = \sum\limits_{B \subset \{1, 2, \ldots aN\}} n^{|B|} \, \lambda^{4 |B|} = 
\sum\limits_{j=0}^{aN}  \binom{a N }{j}  \, n^{j} \, \lambda^{4j}
= \big  (    1 + \lambda^4 \, n    \big )^{aN}.
$$
This concludes the proof of (\ref{eq:bound partition}) and thus the proof of the lemma.
\end{proof}

We now present the proof of Theorem \ref{theo:1}. 
The starting point of the proof is the observation that, if $\mathcal{P} \in SAP_{x}[ N] $ with $N > 1$, then
\begin{equation}\label{eq:starting point}
\mathbb{P}_{G, \lambda, n} ( {\mathcal{P}}_x  = \mathcal{P})  = n \, \lambda^{|  \mathcal{P} |}  \,  \frac{ Z_{\lambda, n}(G \setminus \mathcal{P})   }{Z_{\lambda, n}(G)} 
\leq n  \lambda^{ | \mathcal{P}|}.
\end{equation}
We have that, for an arbitrary real $a \in (0,1)$, and $\ell \in \mathbb{N}$,
\begin{align}
\mathbb{P}_{G, \lambda, n} ( | {\mathcal{P}}_x |  > \ell) & =  \sum\limits_{N=\ell + 1}^{\infty} \, \, \sum\limits_{  \substack{ \mathcal{P} \in  SAP_x(N) : \\ \mathcal{P} \subset G } }
\mathbb{P}_{G, \lambda, n} ( {\mathcal{P}}_x  = \mathcal{P})  \\
& 
\label{eq:two terms}
=\sum\limits_{N=\ell + 1}^{\infty} \, \,  \Big ( \sum\limits_{  \substack{ \mathcal{P} \in  SAP_x(N, aN, P^{\prime}) : \\ \mathcal{P} \subset G } }
\mathbb{P}_{G, \lambda, n} ( {\mathcal{P}}_x  = \mathcal{P})  + 
\, \, \sum\limits_{  \substack{ \mathcal{P} \in  SAP_x(N) : \\  \mathcal{P} \notin  SAP_x(N, aN, P^{\prime}) , \mathcal{P} \subset G } }
\mathbb{P}_{G, \lambda, n} ( {\mathcal{P}}_x  = \mathcal{P}) \, \Big ) .
\end{align}
We will now provide an upper bound for the two terms above.
For the first term, we apply Kesten's pattern theorem, Theorem \ref{theo:Kesten}. Thus, fix $a^{\prime} > 0$ small enough such that
\begin{equation}\label{eq:choicea'}
 \mu^{\prime}:= \limsup_{N \rightarrow \infty} {|SAP_x[N, a^{\prime}N, P^{\prime}]|}^{\frac{1}{N}} \leq \limsup_{N \rightarrow \infty} {|SAW_x[N, a^{\prime}N, P^{\prime}]|}^{\frac{1}{N}}   < \mu.
 \end{equation}
Then, define $\lambda^{\prime}_1 := \frac{2}{\mu  + \mu^{\prime}}$, which satisfies $\lambda^{\prime}_1 > \frac{1}{\mu}$, and assume that $\lambda \in (0, \lambda^{\prime}_1)$.  We deduce from (\ref{eq:starting point}) and  (\ref{eq:choicea'}) that there exists a constant  $c_1 \in (0, \infty)$ such that, for any $\ell \in \mathbb{N}$,
\begin{multline}\label{eq:bound first term}
\sum\limits_{N=\ell + 1}^{\infty} \,  \sum\limits_{  \substack{ \mathcal{P} \in  SAP_x(N, a^{\prime}N, P^{\prime}) : \\ \mathcal{P} \subset G } }
\mathbb{P}_{G, \lambda, n} ( {\mathcal{P}}_x  = \mathcal{P})  \\ 
\leq \ \, n \, \  \, \sum\limits_{ N=\ell + 1}^{\infty} \, \, |  SAP_x(N, a^{\prime}N, P^{\prime}) | \lambda^N   \\
\leq c_1 \, \sum\limits_{ N=\ell + 1}^{\infty} \, \,  \big ( \frac{\mu  + \mu^{\prime}}{2} \big )^N \lambda^N \leq   \,  \frac{c_1}{1 - \frac{\lambda}{\lambda^{\prime}_1}} \, {(\frac{\lambda}{\lambda^{\prime}_1})}^{(\ell+1)}.
\end{multline}

We now use the previous lemma to provide an upper bound for the second term in the right-hand side of 
(\ref{eq:two terms}).
From (\ref{eq:number SAP}),  (\ref{eq:starting point}) and Lemma \ref{lemma:multivalued}, we deduce that, if
\begin{equation}\label{eq:condition lambda}
\lambda < \frac{\big  (    1 + \lambda^4 \, n    \big )^{a^{\prime} N}}{\mu}
\end{equation} 
then there exists $c_2, c_3 \in (0, \infty)$, which depend only on $\lambda$ and $n$, such that,  for any $\ell \in \mathbb{N}$,
\begin{multline}\label{eq:bound second term}
\sum\limits_{N=\ell + 1}^{\infty} \,  \sum\limits_{  \substack{ \mathcal{P} \in  SAP_x(N) : \\  \mathcal{P} \notin  SAP_x(N, a^{\prime}N, P) , \mathcal{P} \subset G } }
\mathbb{P}_{G, \lambda, n} ( {\mathcal{P}}_x  = \mathcal{P}) \, \,   \\
=  \, n \,  \sum\limits_{N=\ell + 1}^{\infty} \,  \sum\limits_{  \substack{ \mathcal{P} \in  SAP_x(N) : \\  \mathcal{P} \notin  SAP_x(N, a^{\prime}N, P) , \mathcal{P} \subset G } }    \lambda^{|  \mathcal{P} |}   \frac{ Z_{\lambda, n}(G \setminus \mathcal{P})   }{Z_{\lambda, n}(G)}  \\
\leq \, n \,  \sum\limits_{N=\ell + 1}^{\infty} \, |SAP_x(N) | \lambda^N {\big (  \frac{1}{1 + \lambda^4 n} \big )}^{a^{\prime}  N}   \\
\leq  \, n \,    \frac{(d-1)}{d} \, \sum\limits_{N=\ell + 1}^{\infty} \,  \, N  \, \big ( \frac{\lambda \, \, \mu}{(1 + \lambda^4 n)^{a^{\prime}}} \big )^N = c_2 e^{-c_3 \ell}.
\end{multline}
Let $\lambda_1 = \lambda_1(n)$ be the solution of 
\begin{equation}\label{eq:lambdasecond}
\lambda \,  \mu \,  = \,  (1 + \lambda^4 n)^{a^{\prime}}
\end{equation}
and note that $\lambda_1(n) >  \frac{1}{\mu}$ for any $n > 0$ and that (\ref{eq:condition lambda}) and (\ref{eq:bound second term}) hold whenever 
$\lambda \in (0, \lambda_1)$.
Combining (\ref{eq:bound first term}) and (\ref{eq:bound second term}) in (\ref{eq:two terms}), we deduce that,
if  
\begin{equation}\label{eq:explicit}
\lambda <   \min \{  \lambda_1^{\prime},  \lambda_1 (n)\},
\end{equation}
we can find $\delta>0$ such that $\mathcal{L}(\delta,  \lambda, n) < \infty$.
Thus, we proved that $\lambda_c(n) \geq \min \{  \lambda_1^{\prime},  \lambda_1 (n)\} > 1/\mu$ and obtained (\ref{eq:strict monotonicity}).

We now prove (\ref{eq:small n}). 
Using the fact that, for any $n$ smaller than a positive value $n_0$,
$\min \{  \lambda_1^{\prime},  \lambda_1 (n)\} = \lambda_1 (n)$,
  using (\ref{eq:lambdasecond}) and performing a Taylor expansion, we obtain that,
  for any $n \in (0, n_0)$,
\begin{align*}
\lambda_c(n) - 1/\mu & \geq  \lambda_1(n) - 1/\mu \\
& = \frac{  (1 + \lambda^4_1(n)\, n )^{a^{\prime}} - 1}{\mu} \\
& = \frac{a^{\prime}}{\mu} \lambda_1^4(0)  \, \, n + O(n^2) \\
& = \frac{a^{\prime}}{\mu^5}\,  n + O(n^2).
\end{align*}
This leads to (\ref{eq:small n}) and concludes the proof.

\section{Acknowledgements}
The author thanks the two anonymous referees for  very useful comments and the German Research Foundation (grant number:  BE 5267/1, DFG)  for providing financial support.

\end{document}